
\documentclass{amsart}

\usepackage{amsmath}
\usepackage{textcomp}
\usepackage{amsfonts}
\usepackage{amssymb,enumerate}
\usepackage{amsthm}
\usepackage{stmaryrd}
\usepackage[all]{xy}

\newcommand{\Ann}{\operatorname{Ann}}
\newcommand{\Ass}{\operatorname{Ass}}
\newcommand{\Jac}{\operatorname{Jac}}

\newcommand{\Spec}{\operatorname{Spec}}
\newcommand{\Supp}{\operatorname{Supp}}

\newcommand{\depth}{\operatorname{depth}}

\newcommand{\rr}{R\bowtie^f J}
\newcommand{\fd}{\operatorname{f-depth}}
\newcommand{\gd}{\operatorname{g-depth}}

\newcommand{\fm}{\frak{m}}
\newcommand{\fp}{\frak{p}}

\newcommand{\fq}{\frak{q}}
\newcommand{\fa}{\frak{a}}

\newtheorem{thm}{Theorem}[section]
\newtheorem{cor}[thm]{Corollary}
\newtheorem{lem}[thm]{Lemma}
\newtheorem{prop}[thm]{Proposition}

\begin{document}
	
	\bibliographystyle{amsplain}

	\date{}
	
	\author{Y. Azimi}

	\address{Department of Mathematics, University of Tabriz,
		Tabriz, Iran.} \email{u.azimi@tabrizu.ac.ir}
	
	\keywords{Amalgamated algebra; Cohen-Macaulay ring;   
	$f$-ring; generalized $f$-ring}
	
	\subjclass[2010]{Primary 13A15, 13C14, 13C15, 13E05, 13H10}


	\title[(generalized)  filter properties on amalgamated algebra]{(generalized)  filter properties of the amalgamated algebra}

	\begin{abstract}
		Let $R$ and $S$ be commutative rings with unity, $f:R\to S$ a ring homomorphism and $J$ an ideal of $S$. Then the subring $R\bowtie^fJ:=\{(a,f(a)+j)\mid a\in R$ and $j\in J\}$ of $R\times S$ is called the amalgamation of $R$ with $S$ along $J$ with respect to $f$. In this paper, we determine when  $R\bowtie^fJ$ is a (generalized) filter ring.
	\end{abstract}
	
\maketitle
	
\section{Introduction}
Throughout  this paper, 
let $R$ and $S$ be two
commutative rings with identity,
 $J$ be a non-zero
proper ideal of $S$, and 
$f:R\to S$ be a ring homomorphism.

D'Anna, Finocchiaro, and Fontana in
 \cite{DFF} and \cite{DFF2} have
introduced  the following
subring (with standard component-wise operations)
$$R\bowtie^fJ:=\{(r,f(r)+j)\mid r\in R\text{ and }j\in J\}$$ 
of
$R\times S$, called the \emph{amalgamated algebra} (or \emph{amalgamation}) of $R$ with $S$ along $J$
with respect to $f$. This construction generalizes the amalgamated
duplication of a ring along an ideal
 (introduced and studied in
\cite{DF}). Moreover, several classical
 constructions such as Nagata's
idealization (cf. \cite[page 2]{Na}), 
the $R + XS[X]$ and
the $R+XS\llbracket X \rrbracket$ constructions
 can be studied as
particular cases of this construction
 (see \cite[Example 2.5 and Remark 2.8]{DFF}).
Recently, many properties of amalgamations investigated in several papers
(e.g. \cite{sss16}, \cite{ass17}, \cite{ass}, \cite{acat}, etc.)
and
the construction has proved its worth 
providing numerous (counter)examples
in commutative ring theory.

In \cite{cst}, Cuong et al. introduced
 the notion of filter regular sequence as an
extension of regular sequence, and via this notion,
 they studied \emph{$f$-modules}, as an
 extension of (generalized) Cohen-Macaulay modules.
 This structure is a well-known structure in commutative algebra and
 have applications in algebraic geometry.
 Then, in \cite{nh}, Nhan extended this notion
 to generalized regular sequence, which
 in turn,  leads  to the introduction of
  \emph{generalized $f$-modules} in \cite{nhm}.
We have the following implications:
\begin{center}
	Gorenstein ring $\Longrightarrow$
	Cohen-Macaulay ring $\Longrightarrow$
	generalized Cohen-Macaulay ring $\Longrightarrow$
	$f$-ring $\Longrightarrow$
	generalized $f$-ring.
\end{center}
It has already investigated that when
$\rr$ is one of the  three first 
in the above list (\cite{ass17}, \cite{ass19},\cite{ass}, \cite{A}).
In this paper, we investigate when it
is one of the two last properties.

The proofs for the two  case is almost
the same, but for $f$-modules easier.
Therefore we deal with case of
generalized $f$-modules in details, 
and the same  proof with minor modifications works in the case
of $f$-modules.  We provide a sketch of
proof for this case and leave details for the reader.

\section{Results}

Let us first fix some notation which we shall use throughout the
 paper:
 As mentioned above, $R$ and $S$ are two commutative  rings with identity, 
  $J$ is an  ideal of the ring $S$, and $f:R\to S$ is a ring homomorphism. 
In the sequel, we consider contractions
and extensions with respect to
the natural embedding
$\iota _R: R\to R\bowtie^f J$ defined by
$\iota _R (x)=(x,f(x))$, for every
$x\in R$.

Let $I$ be an ideal of $R$, and $M$ be a finitely
generated  $R$-module such that $M \neq IM$.
 We shall refer to the length of a maximal $M$-sequence contained
in $I$ as the depth of $M$ in $I$, and we shall denote this by $\depth (I,M)$.
 It will be
 convenient to use $\depth M$ to denote
 $\depth (\fm,M)$ when $(R,\fm)$
 is a  local ring.

(Generalized) $f$-modules are defined in the 
context of Noetherian local rings for
finitely generated modules. Thus we always
assume that $(R,\fm)$ is a Noetherian local
  ring and $J$ is 
 finitely generated as an $R$-module.
We will also assume that $J\subseteq \Jac (S)$.
When this is the case,
$(\rr , \fm^{\prime_f})$ is also a Noetherian local ring (see \cite[Proposition 5.7]{DFF} and
\cite[Corollary 2.7]{DFF1}).

The notion of $M$-\emph{generalized regular sequence} of $M$ is defined as a sequence  $x_1,\ldots, x_n$ of elements in $\fm$ such that,  for all $i = 1,\ldots, n$, $x_i \notin \fp$ for all $\fp \in \Ass (M/(x_1,\ldots, x_{i-1})M)$ satisfying $\dim R/\fp >1$.
The length of a maximal generalized regular sequence of $M$ in $I$ is
called the \emph{generalized depth of $M$ in $I$} and denoted by $\gd (I,M)$.
In this paper, we use the following characterization for $\gd (I,M)$ by 
the support of local cohomology module $H^i_{I}(M)$:

\begin{lem}\label{0}
	Let $I$ be
	an ideal of $R$, and $M$ be a finitely generated $R$-module. 
	Then the following equality holds.
	$$\gd(I, M)=\min \{r\mid \text{there exists } \fp \in \Supp_R (H^r_{I}(M))  \text{ such that }\dim R/\fp >1\}.$$
\end{lem}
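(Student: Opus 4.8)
The plan is to translate the right-hand side into a statement purely about depth after localization, and then to prove the resulting identity by examining how generalized regular sequences behave under localization at high-dimensional primes. Since local cohomology commutes with localization, $H^r_I(M)_{\fp}\cong H^r_{IR_{\fp}}(M_{\fp})$ for every $\fp\in\Spec R$, so $\fp\in\Supp_R(H^r_I(M))$ exactly when $H^r_{IR_{\fp}}(M_{\fp})\neq 0$. Combining this with Grothendieck's theorem $\min\{r\mid H^r_{IR_{\fp}}(M_{\fp})\neq 0\}=\depth(IR_{\fp},M_{\fp})$, and discarding the primes with $I\not\subseteq\fp$ or $M_{\fp}=0$ (for which all local cohomology vanishes), the quantity on the right of the asserted equality becomes
$$\min\{\depth(IR_{\fp},M_{\fp})\mid \fp\in\Spec R,\ \dim R/\fp>1\}.$$
Thus the whole problem reduces to proving that $\gd(I,M)$ equals this minimum, with the usual convention that an empty minimum is $+\infty$ (the degenerate case $\dim R/\fp\le 1$ for all $\fp$).

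The key observation, which I would isolate as the technical heart of the argument, is that a generalized $M$-regular sequence in $I$ localizes to a genuine regular sequence at any prime $\fp$ with $\dim R/\fp>1$. Indeed, if $x_1,\ldots,x_n\in I$ is a generalized $M$-regular sequence and $\fq\in\Ass_R(M/(x_1,\ldots,x_{i-1})M)$ satisfies $\fq\subseteq\fp$, then the surjection $R/\fq\twoheadrightarrow R/\fp$ forces $\dim R/\fq\ge\dim R/\fp>1$, so by definition $x_i\notin\fq$. Since the associated primes of $(M/(x_1,\ldots,x_{i-1})M)_{\fp}$ are exactly those $\fq\in\Ass_R(M/(x_1,\ldots,x_{i-1})M)$ contained in $\fp$, this shows $x_i$ is a nonzerodivisor on $M_{\fp}/(x_1,\ldots,x_{i-1})M_{\fp}$ for each $i$. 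Hence $x_1,\ldots,x_n$ is an $M_{\fp}$-regular sequence in $IR_{\fp}$, giving $n\le\depth(IR_{\fp},M_{\fp})$ for every admissible $\fp$, and therefore $\gd(I,M)\le\min\{\depth(IR_{\fp},M_{\fp})\mid\dim R/\fp>1\}$.

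For the reverse inequality I would take a maximal generalized $M$-regular sequence $x_1,\ldots,x_d$ in $I$, where $d=\gd(I,M)$, and set $\overline{M}=M/(x_1,\ldots,x_d)M$. Maximality means $I$ contains no generalized $\overline{M}$-regular element, which by prime avoidance (the set $\Ass_R(\overline{M})$ being finite) yields a prime $\fp\in\Ass_R(\overline{M})$ with $I\subseteq\fp$ and $\dim R/\fp>1$. For this $\fp$ we have $\fp R_{\fp}\in\Ass_{R_{\fp}}(\overline{M}_{\fp})$ and $IR_{\fp}\subseteq\fp R_{\fp}$, so $\depth(IR_{\fp},\overline{M}_{\fp})=0$; on the other hand $x_1,\ldots,x_d$ is $M_{\fp}$-regular by the previous paragraph, so the standard depth-reduction formula gives $\depth(IR_{\fp},M_{\fp})=d+\depth(IR_{\fp},\overline{M}_{\fp})=d$. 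This exhibits an admissible prime realizing the value $d$, whence $\min\{\depth(IR_{\fp},M_{\fp})\mid\dim R/\fp>1\}\le\gd(I,M)$, completing the equality.

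I expect the routine points—flat base change, Grothendieck's grade–cohomology theorem, and depth reduction by a regular sequence—to be standard, and the well-definedness of $\gd(I,M)$ and of maximal generalized regular sequences to be available from \cite{nh}. The main obstacle, and the step I would write out most carefully, is the passage to the localized picture: verifying that the dimension inequality $\dim R/\fq\ge\dim R/\fp$ correctly converts the generalized regularity condition into honest regularity at primes of dimension greater than one, and ensuring that the index set over which the minimum is taken genuinely coincides for $M$ and for the quotient $\overline{M}$ (so that primes not containing $I$ or with vanishing localization are harmlessly excluded on both sides).
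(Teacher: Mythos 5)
Your proof is correct, but it takes a genuinely different route from the paper, whose own proof is essentially a two-line citation: when $\dim M/IM>1$ the stated equality is exactly \cite[Proposition 4.5]{nh}, and when $\dim M/IM\le 1$ both sides are $+\infty$ because $\gd(I,M)=\infty$ by convention while $\Supp_R(H^r_I(M))\subseteq\Supp(M/IM)$ contains no prime of dimension greater than one. What you have written is in effect a self-contained proof of the cited result. You first convert the right-hand side, via flat base change and Grothendieck's vanishing theorem, into $\min\{\depth(IR_\fp,M_\fp)\mid \fp\in\Supp(M/IM),\ \dim R/\fp>1\}$ --- this intermediate characterization is essentially \cite[Proposition 4.4]{nh}, which the paper invokes separately in the proof of Theorem \ref{gd} --- and you then identify $\gd(I,M)$ with that minimum by showing that a generalized regular sequence localizes to an honest regular sequence at any prime of dimension greater than one, and by using prime avoidance on $\Ass_R(\overline{M})$ to produce an admissible prime where the depth is exactly $d$. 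The individual steps (localization of associated primes, the depth-reduction formula, the exclusion of primes not containing $I$) all check out, and as a by-product your argument re-derives the well-definedness of $\gd(I,M)$ rather than importing it. The trade-off is self-containment versus brevity: the paper's version is reasonable given that \cite{nh} is a standing reference, while yours makes the lemma independent of it. One small request: spell out the degenerate case $\dim M/IM\le 1$ explicitly (no finite maximal generalized regular sequence exists, the index set of admissible primes is empty, and both sides equal $+\infty$); at present you dispose of it only with a parenthetical remark about the empty minimum.
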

\begin{proof}
	If $\dim M/I M>1$, then the assertion holds by \cite[Proposition 4.5]{nh}.
	If $\dim M/I M\leq 1$, then
	by definition, $\gd(I, M)=\infty$.
	The other side is also infinite since  
	$\Supp_R (H^r_{I}(M)) \subseteq 
	\Supp (M) \cap \Supp (R/I)=\Supp (M/IM)$.
\end{proof}

The following lemma, which has the key role
in the proof of Theorem \ref{gd},
 links the $\gd$ of
$\rr$ in the extension ideal $\fa^e$
to the  $\gd$ of $R$ and
$J$ in the prime ideal $\fa$.

\begin{lem}\label{gdlem}
	Let $\fa \in \Spec (R)$.  Then 
	the following holds:
	$$\gd(\fa^e,\rr)=\min \{\gd (\fa,R),\gd (\fa,J)\}.$$	
\end{lem}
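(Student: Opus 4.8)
The plan is to compute the local cohomology of $\rr$ over itself by transporting it to local cohomology over $R$, exploiting that $\rr$ is a module-finite extension of $R$. First I would record that $\iota_R$ is injective and that, as an $R$-module, $\rr\cong R\oplus J$; since $J$ is finitely generated over $R$, this presents $\rr$ as a module-finite, hence integral, extension of $R$. Writing $H^r:=H^r_{\fa^e}(\rr)$, the Independence Theorem for local cohomology applied to $\iota_R\colon R\to\rr$ furnishes an isomorphism $H^r_{\fa^e}(\rr)\cong H^r_{\fa}(\rr)$, where the right-hand side is the local cohomology of $\rr$ viewed as an $R$-module. Additivity of $H^r_{\fa}(-)$ together with the decomposition $\rr\cong R\oplus J$ then gives $H^r_{\fa}(\rr)\cong H^r_{\fa}(R)\oplus H^r_{\fa}(J)$, so that
\[
\Supp_R(H^r)=\Supp_R\!\bigl(H^r_{\fa}(R)\bigr)\cup\Supp_R\!\bigl(H^r_{\fa}(J)\bigr).
\]
Feeding this into Lemma \ref{0} identifies $\min\{\gd(\fa,R),\gd(\fa,J)\}$ with $\min\{\,r\mid\exists\,\fp\in\Supp_R(H^r)\text{ with }\dim R/\fp>1\,\}$.

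It remains to match this with the left-hand side, which by Lemma \ref{0} applied over $\rr$ equals $\min\{\,r\mid\exists\,\fQ\in\Supp_{\rr}(H^r)\text{ with }\dim(\rr)/\fQ>1\,\}$. I would establish, for each fixed $r$, the equivalence: there is $\fQ\in\Supp_{\rr}(H^r)$ with $\dim(\rr)/\fQ>1$ if and only if there is $\fp\in\Supp_R(H^r)$ with $\dim R/\fp>1$. For the forward direction, set $\fp:=\iota_R^{-1}(\fQ)$; localization shows $(H^r)_{\fp}\neq0$, and integrality gives $\dim R/\fp=\dim(\rr)/\fQ>1$. For the converse, from $(H^r)_{\fp}\neq0$ I would pass to the nonzero module $(H^r)_{\fp}$ over the semilocal ring $(\rr)_{\fp}$; as its support is nonempty, there is a prime $\fQ$ of $\rr$ with $\iota_R^{-1}(\fQ)\subseteq\fp$ and $(H^r)_{\fQ}\neq0$. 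Putting $\fp':=\iota_R^{-1}(\fQ)$ we obtain $\dim(\rr)/\fQ=\dim R/\fp'\geq\dim R/\fp>1$.

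The crux is this converse step. The naive attempt---to locate a prime of $\rr$ lying exactly over $\fp$ inside the support---can fail, since a nonzero module over the semilocal ring $(\rr)_{\fp}$ need not be supported at a maximal ideal. What rescues the argument is that exact lying-over is unnecessary: any $\fQ$ in the support contracts to some $\fp'\subseteq\fp$, and because $\dim R/(-)$ only increases under this specialization, the strict inequality $\dim R/\fp>1$ is inherited by $\fp'$ and, by integrality, by $\fQ$. Through this equivalence the two displayed minima coincide term by term (both being $\infty$ when no admissible $r$ occurs), which is precisely the asserted equality $\gd(\fa^e,\rr)=\min\{\gd(\fa,R),\gd(\fa,J)\}$.
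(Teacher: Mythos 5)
Your proof is correct and follows essentially the same route as the paper's: the Independence Theorem, the $R$-module decomposition $\rr\cong R\oplus J$, Lemma \ref{0} on both sides, and a transfer of the support condition between $\Spec(\rr)$ and $\Spec(R)$ via integrality. The only divergence is in the converse direction of the transfer step: the paper picks a cyclic submodule $\rr/\mathcal{I}\subseteq H^r_{\fa^e}(\rr)$ with $\mathcal{I}^c\subseteq\fp$ and applies lying over to get a prime of $\rr$ lying \emph{exactly} over $\fp$, whereas you settle for any prime $\fQ$ in the support contracting into $\fp$ and use that $\dim R/(-)$ only increases when the prime shrinks --- both work. One small correction: your aside that exact lying over ``can fail'' because a nonzero module over the semilocal ring $(\rr)_\fp$ ``need not be supported at a maximal ideal'' is mistaken --- for any nonzero element $x$ the support contains the nonempty closed set $\V(\Ann(x))$, hence some maximal ideal, which here necessarily contracts to $\fp R_\fp$ by integrality --- but since your argument never relies on that remark, nothing is broken.
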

\begin{proof}
	We first show that 
	the existence of some  $\mathcal{P} \in \Supp_{\rr} \left(H^r_{\fa^e}(R\bowtie^f J)\right)$ with the property $\dim \rr/\mathcal{P} >1$	
	is equivalent to the existence of some
	$\fp \in \Supp_R \left(H^r_{\fa^e}(\rr)\right)$ 
	with the property $\dim R/\fp >1$.
	To achieve this, first we
	note that, by \cite[Lemma 3.6]{DFF2},
	the extension $\iota _R: R\to R\bowtie^f J$ is  integral  
	since we assume that $J$ is
	finitely generated as an $R$-module. 
	Therefore, for any $\mathcal{P} \in \Spec (\rr)$,
	we have
	 $\dim \rr/\mathcal{P} >1$ if and only if
	$\dim R/\mathcal{P}^c >1$.
	Next,	
	let $\mathcal{P} \in \Supp_{\rr} \left(H^r_{\fa^e}(\rr)\right)$, 
	say $\alpha/1$ is a non-zero element
	of $\left(H^r_{\fa^e}(\rr)\right)_\mathcal{P}$.
	If $r\in R$ such that $r\alpha =0$, then
	$f(r)\in \mathcal{P}$, i.e. $r\in \mathcal{P}^c$.
	We have thus proved $\mathcal{P}^c \in \Supp_R \left(H^r_{\fa^e}(\rr)\right)$.\\
	Suppose conversely that 
	$\fp \in \Supp_R \left(H^r_{\fa^e}(\rr)\right)$.
	Then, for some
	ideal $\mathcal{I}$ of $\rr$, with the property  
	$\rr / \mathcal{I}\subseteq H^r_{\fa^e}(\rr)$,
	we have
	$\fp \in \Supp_R \left(\rr / \mathcal{I}\right)$. 
	From this we have
	$\mathcal{I}^c\subseteq \fp$.
	By lying over property, there exists $\mathcal{P} \in \Spec (\rr)$
	such that $\mathcal{I}\subseteq \mathcal{P}$ and $\mathcal{P}^c=\fp$,
	hence that $\mathcal{P} \in \Supp_{\rr} \left(\rr / \mathcal{I}\right) \subseteq \Supp_{\rr} \left(H^r_{\fa^e}(\rr)\right)$.
	 This completes the proof of our claim. Now we have:	
	\begin{align*}
	\gd(\fa^e, \rr )
	&=\min \{r| \exists  \mathcal{P} \in \Supp_{\rr} \left(H^r_{\fa^e}(\rr)\right);\  \dim \rr/\mathcal{P} >1\}\\
	&=\min \{r| \exists  \fp \in \Supp_{R} \left(H^r_{\fa^e}(\rr)\right);\ \dim R/\fp >1\}\\
	&=\min \{r| \exists  \fp \in \Supp_{R} \left(H^r_{\fa}(\rr)\right);\ \dim R/\fp >1\}\\
	&=\min\{r|\exists  \fp \in \Supp_{R} \left(H^r_\fa (R)\oplus H^r_\fa (J)\right);\ \dim R/\fp >1\}\\
	&= \min \{\gd (\fa,R),\gd (\fa,J)\}.
	\end{align*}
	The first and last equality hold
	by Lemma \ref{0}, while	
	the second one holds by the above
	observation.  The third equality follows by the Independence Theorem of local cohomology
	\cite[Theorem 4.2.1]{BS}, and
	the forth
	equality obtained using the $R$-module
 	isomorphism $\rr \cong R\oplus J$ \cite[Lemma 2.3]{DFF}.
	
\end{proof}
\emph{Generalized $f$-modules} were introduced in \cite{nhm} as modules for which every system of parameters is
a generalized regular sequence. A ring is called a 
\emph{generalized $f$-ring} if it is a generalized $f$-module over itself.
For more details we refer the reader to
\cite{nh} and \cite{nhm}.
We define  a finitely generated $R$-module $M$ to be 
\emph{maximal generalized $f$-module} if
$\gd (\fp,M)=\dim (R) - \dim (R/\fp)$, 
for any $\fp \in \Supp M$ satisfying
$\dim R/\fp >1$.
This definition has stem in the following 
proposition.
\begin{prop}\label{nhm1}
	Assume that $M$ is a finitely generated $R$-module
	 such that $\dim M>1$. Then the following statements are equivalent:
	\begin{itemize}
		\item [(1)]
		$M$ is a generalized $f$-module.
		\item [(2)]
		$\gd (\fp,M)=\dim (M) - \dim (R/\fp)$
		for each $\fp \in \Supp M$ satisfying
		$\dim R/\fp >1$.
		\item [(3)]
		$\gd (I,M)=\dim (M) - \dim (R/I)$
		for any proper ideal $I$ of $R$
		satisfying $I\supseteq \Ann (M)$ and $\dim R/I >1$.
	\end{itemize}
\end{prop}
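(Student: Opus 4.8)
The plan is to treat the equivalence $(1)\Leftrightarrow(2)$ as the known characterization of generalized $f$-modules established in \cite{nhm} — it is the exact analogue, in the generalized setting, of the Cuong--Schenzel--Trung description of generalized Cohen--Macaulay modules — and then to prove $(2)\Leftrightarrow(3)$, which is the only genuinely new assertion. Throughout I would reformulate $\gd$ via Lemma \ref{0} as $\gd(I,M)=\min\{r\mid \dim_R H^r_I(M)\geq 2\}$, so that the entire statement becomes one about the support dimensions of the modules $H^r_I(M)$. The implication $(3)\Rightarrow(2)$ is then immediate: for $\fp\in\Supp M$ with $\dim R/\fp>1$ one has $\Ann(M)\subseteq\fp$, so $\fp$ is an admissible ideal in $(3)$, and applying $(3)$ with $I=\fp$ yields precisely the formula in $(2)$.

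For $(2)\Rightarrow(3)$ I would set $s=\dim M-\dim R/I$ and establish the two inequalities separately. For the lower bound $\gd(I,M)\geq s$, I build a partial system of parameters inside $I$: when $s>0$ one has $\dim R/I<\dim M$, so at each stage every highest-dimensional prime of $\Supp\bigl(M/(x_1,\dots,x_{i-1})M\bigr)$ has $\dim R/\fp=\dim M-(i-1)>\dim R/I$ and hence cannot contain $I$; prime avoidance then lets me choose $x_i\in I$ outside all of them, producing $x_1,\dots,x_s\in I$ with $\dim M/(x_1,\dots,x_s)M=\dim M-s$. These elements form part of a system of parameters of $M$; extending them to a full system of parameters and invoking $(1)$ (available because $(1)\Leftrightarrow(2)$), the whole sequence is a generalized regular sequence, hence so is its initial segment $x_1,\dots,x_s$, giving $\gd(I,M)\geq s$. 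For the upper bound $\gd(I,M)\leq s$ I would use that $\gd$ is monotone: a generalized regular sequence contained in $I$ is, verbatim, a generalized regular sequence contained in any prime $\fp\supseteq I$, so $\gd(I,M)\leq\gd(\fp,M)$. Choosing $\fp$ minimal over $I$ with $\dim R/\fp=\dim R/I>1$ (such $\fp$ lies in $\Supp M$ since $I\supseteq\Ann M$) and applying $(2)$ gives $\gd(\fp,M)=\dim M-\dim R/\fp=s$. The two inequalities together yield $(3)$.

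The main obstacle is the upper bound, and specifically its reliance on the substantive input $(1)\Leftrightarrow(2)$ from \cite{nhm}: the monotonicity reduction is painless, but it merely transports the problem for a general $I$ to the prime case, where the equality $\gd(\fp,M)=\dim M-\dim R/\fp$ is exactly the nontrivial content (cohomologically, that $H^{\dim M-\dim R/\fp}_{\fp}(M)$ has support of dimension at least $2$). I would also have to verify the groundwork that makes $\gd$ well behaved, namely that all maximal generalized regular sequences in a fixed ideal share a common length, so that the monotonicity step is legitimate; both facts belong to the basic theory developed in \cite{nh}. Finally, the degenerate case $s=0$ (that is, $\dim R/I=\dim M$) should be checked directly, using that a minimal prime $\fp$ of $I$ with $\dim R/\fp=\dim M$ is necessarily associated to $M$ and therefore contributes to $\Supp H^0_I(M)=\Supp\Gamma_I(M)$, forcing $\gd(I,M)=0$.
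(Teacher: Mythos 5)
Your proposal is correct, and its skeleton coincides with the paper's: both treat the equivalence of (1) with the prime-ideal condition (2) as the content of \cite[Proposition 2.5]{nhm} and reduce everything else to the passage from prime ideals to general ideals containing $\Ann(M)$. The differences are in how that passage is carried out. The paper closes the cycle as $(1)\Rightarrow(2)$, $(2)\Rightarrow(3)$, $(3)\Rightarrow(1)$, citing \cite[Proposition 2.5]{nhm} for the first and last arrows and disposing of $(2)\Rightarrow(3)$ by declaring it ``similar to the proof of \cite[Remark 4.2]{lt}'', i.e.\ by transporting the filter-depth argument (which works through the local cohomology characterization of Lemma \ref{0} together with \cite[Proposition 4.3(ii)]{nh}) to the generalized setting. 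You instead prove $(3)\Rightarrow(2)$ by the trivial specialization $I=\fp$ and give a self-contained, sequence-theoretic proof of $(2)\Rightarrow(3)$: the lower bound $\gd(I,M)\geq \dim M-\dim R/I$ by producing a partial system of parameters inside $I$ via prime avoidance and invoking $(1)$, and the upper bound by monotonicity of $\gd$ under enlarging the ideal to a minimal prime $\fp\supseteq I$ with $\dim R/\fp=\dim R/I$. Both halves are sound (your prime-avoidance step works because no minimal prime of $\Supp(M/(x_1,\dots,x_{i-1})M)$ of maximal dimension can contain $I$, and an initial segment of a generalized regular sequence is again one), and they correctly presuppose only the well-definedness of $\gd$ from \cite{nh}, which you flag. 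What your route buys is explicitness --- it avoids leaning on an argument-by-analogy with \cite{lt} --- at the cost of needing the implication $(2)\Rightarrow(1)$ from \cite[Proposition 2.5]{nhm}, which the paper's cycle does not formally require; this is harmless since that reference does supply the full equivalence. Your separate treatment of the case $s=0$ via $\Supp H^0_I(M)$ is unnecessary, as your upper-bound argument already yields $\gd(I,M)\leq 0$ there, but it is not wrong.
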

\begin{proof}
	$(1) \Rightarrow (2)$ and $(3)\Rightarrow(1)$
	is by \cite[Proposition 2.5]{nhm}. The proof 
	of $(2) \Rightarrow (3)$ is similar to 
	the proof of \cite[Remark 4.2]{lt},
	using \cite[Proposition 4.3 (ii)]{nh} 
	and \cite[Proposition 2.5]{nhm}.
\end{proof}

We use the above proposition to  investigate
when $\rr$ is a generalized $f$-ring, which is 
 one of  our main results.
Recall that a finitely generated module $M$ over a Noetherian local
ring $(R,\fm)$ is called a \emph{maximal Cohen-Macaulay $R$-module} if
$\depth M=\dim R$.
In the sequel, when we consider $J$ as a module,
we always consider it as an $R$-module via
the homomorphism  $f:R\to S$. 
In particular, by $\Supp J$ we mean
 $\Supp_R J$.
\begin{thm}\label{gd}
The  following statements are equivalent:
\begin{itemize}
	\item [(1)]
	$\rr$ is a generalized $f$-ring.
	\item [(2)]
	$R$ is a generalized $f$-ring and  $J$ is a maximal generalized $f$-module.
	\item [(3)]
	$R$ is a generalized $f$-ring and
	$J_\fp$ is  maximal Cohen-Macaulay for
	 any $\fp \in \Supp (J)$ satisfying
	 $\dim R/\fp >1$.
\end{itemize}
\end{thm}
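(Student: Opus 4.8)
The plan is to prove the cycle of implications $(1)\Rightarrow(2)\Rightarrow(3)\Rightarrow(1)$, leaning heavily on Lemma \ref{gdlem} to translate generalized-depth conditions on $\rr$ into conditions on $R$ and $J$ separately. The key structural input is that $\iota_R:R\to\rr$ is integral (so dimensions are preserved: $\dim\rr=\dim R$, and for any prime $\mathcal{P}$ of $\rr$ one has $\dim\rr/\mathcal{P}=\dim R/\mathcal{P}^c$), together with the $R$-module isomorphism $\rr\cong R\oplus J$ from \cite[Lemma 2.3]{DFF}. Throughout I would characterize the generalized $f$-ring property via Proposition \ref{nhm1}, using formulation $(2)$ there: a module $M$ with $\dim M>1$ is a generalized $f$-module iff $\gd(\fp,M)=\dim M-\dim R/\fp$ for every $\fp\in\Supp M$ with $\dim R/\fp>1$.

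For $(1)\Rightarrow(2)$, I would start from the hypothesis that $\rr$ is a generalized $f$-ring. Applying Proposition \ref{nhm1}(2) to $M=\rr$ over itself, every prime $\mathcal{P}$ of $\rr$ with $\dim\rr/\mathcal{P}>1$ satisfies $\gd(\mathcal{P},\rr)=\dim\rr-\dim\rr/\mathcal{P}$. The natural move is to restrict attention to primes of the form $\fa^e$ for $\fa\in\Spec(R)$ (or to contract arbitrary primes back to $R$ via integrality) and invoke Lemma \ref{gdlem}, which gives $\gd(\fa^e,\rr)=\min\{\gd(\fa,R),\gd(\fa,J)\}$. Since $\dim\rr=\dim R$ and $\dim\rr/\fa^e=\dim R/\fa$ by integrality, the equation $\min\{\gd(\fa,R),\gd(\fa,J)\}=\dim R-\dim R/\fa$ must hold for all relevant $\fa$. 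Because $\gd(\fa,R)\le\dim R-\dim R/\fa$ always (a standard bound, itself an instance of the $\le$ direction underlying Proposition \ref{nhm1}), the minimum forcing equality gives both $\gd(\fa,R)=\dim R-\dim R/\fa$, so $R$ is a generalized $f$-ring, and $\gd(\fa,J)\ge\dim R-\dim R/\fa$. Combined with the universal upper bound $\gd(\fa,J)\le\dim J-\dim R/\fa$ and $\dim J\le\dim R$, this pins down $\gd(\fa,J)=\dim J-\dim R/\fa=\dim R-\dim R/\fa$, which (via the definition of maximal generalized $f$-module, noting $\dim J=\dim R$ here) is exactly the statement that $J$ is a maximal generalized $f$-module.

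For $(2)\Leftrightarrow(3)$, the bridge is the relation between generalized depth and ordinary depth after localizing. For $\fp\in\Supp J$ with $\dim R/\fp>1$, the condition $\gd(\fp,J)=\dim R-\dim R/\fp$ (maximal generalized $f$-module) should be matched against $\depth(J_\fp)=\dim R_\fp$ (maximal Cohen-Macaulay). I would use Lemma \ref{0} to express $\gd(\fp,J)$ via the support of $H^r_\fp(J)$ with $\dim R/\cdot>1$, and relate this to the local cohomology of $J_\fp$ over $R_\fp$, where the constraint $\dim R/\fp>1$ effectively removes the role of the one-dimensional primes and reduces the generalized condition at $\fp$ to an honest Cohen-Macaulayness of the localization $J_\fp$. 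The point is that $\depth J_\fp=\dim R_\fp=\dim R-\dim R/\fp$ is precisely the vanishing of $H^i_{\fp R_\fp}(J_\fp)$ below the top, matching the generalized-depth formula. For $(3)\Rightarrow(1)$, I would run the argument in reverse: assuming $R$ is a generalized $f$-ring and each $J_\fp$ is maximal Cohen-Macaulay, I reconstruct $\gd(\fa,J)=\dim R-\dim R/\fa$ for all $\fa$ with $\dim R/\fa>1$, feed this together with the generalized $f$-ring property of $R$ into Lemma \ref{gdlem} to obtain $\gd(\fa^e,\rr)=\dim\rr-\dim\rr/\fa^e$ for all such $\fa$, and conclude via Proposition \ref{nhm1}(3) (checking ideals $I\supseteq\Ann\rr$) that $\rr$ is a generalized $f$-ring.

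The main obstacle I anticipate is the careful handling of the degenerate low-dimensional cases and the passage between primes of $\rr$ and primes of $R$. Specifically, Proposition \ref{nhm1} requires $\dim M>1$, so I must separately confirm that $\dim\rr=\dim R>1$ in the nontrivial case and address what happens when $\dim J<\dim R$ or when $J$ has embedded behavior at one-dimensional primes; the definition of maximal generalized $f$-module only constrains primes with $\dim R/\fp>1$, and I must verify that the $\min$ in Lemma \ref{gdlem} genuinely isolates the $J$-contribution without interference from primes of the ``wrong'' dimension. The equivalence $(2)\Leftrightarrow(3)$ also hinges on correctly identifying the generalized-depth formula at a single prime $\fp$ with maximal Cohen-Macaulayness of $J_\fp$, which requires that the condition $\dim R/\fp>1$ be strong enough to suppress the defect measured by generalized (as opposed to ordinary) filter regularity — this is where I would be most careful to cite the precise local-cohomology characterization rather than wave at it.
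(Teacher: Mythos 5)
Your treatment of $(1)\Leftrightarrow(2)$ is essentially the paper's own argument: both rest on Lemma \ref{gdlem}, the integrality of $\iota_R$ (giving $\fp=\fp^{ec}$ and $\dim\rr/\fp^e=\dim R/\fp$), the universal bound $\gd(\fa,R)\le\dim R-\dim R/\fa$, and the squeeze through Proposition \ref{nhm1}; that part is fine. One caveat on your cycle structure: in $(3)\Rightarrow(1)$ you only establish $\gd(\fa^e,\rr)=\dim\rr-\dim\rr/\fa^e$ at extended primes, whereas Proposition \ref{nhm1} must be verified at every prime $\mathcal{P}$ of $\rr$ (or every ideal containing the annihilator), and not every prime of $\rr$ is extended from $R$. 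The paper closes this by combining the monotonicity $\gd(\mathcal{P},\rr)\ge\gd(\mathcal{P}^{ce},\rr)$ with the upper bound $\gd(\mathcal{P},\rr)\le\dim\rr-\dim\rr/\mathcal{P}$ and the equality $\dim\rr/\mathcal{P}=\dim R/\mathcal{P}^c$; this is fixable but must be said.

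The genuine gap is in $(2)\Leftrightarrow(3)$, in the direction where maximal Cohen-Macaulayness of the localizations is the hypothesis. Your plan is to identify the condition $\gd(\fp,J)=\dim R-\dim R/\fp$ with the vanishing of $H^i_{\fp R_\fp}(J_\fp)$ below the top. But localizing at $\fp$ only detects the prime $\fq=\fp$ in the supports of the modules $H^i_\fp(J)$, so what you recover is $\depth J_\fp=\inf\{i\mid H^i_\fp(J)_\fp\neq 0\}$, and by Lemma \ref{0} this yields only the one-sided bound $\gd(\fp,J)\le\depth J_\fp$: the generalized depth is governed by \emph{all} primes $\fq\supseteq\fp$ with $\dim R/\fq>1$ occurring in $\Supp H^i_\fp(J)$, and those with $\fq\supsetneq\fp$ are invisible after localizing at $\fp$. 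The one-sided bound does suffice for $(2)\Rightarrow(3)$, exactly as in the paper ($\depth J_\fp\ge\gd(\fp,J)=\dim R-\dim R/\fp\ge\dim R_\fp\ge\depth J_\fp$), but for $(3)\Rightarrow(2)$ you need control in the opposite direction, and the tool the paper uses is Nhan's formula $\gd(\fp,J)=\min\{\depth J_\fq\mid\fq\in\Supp(J/\fp J),\ \dim R/\fq>1\}$ from \cite[Proposition 4.4]{nh}: pick $\fq$ attaining the minimum, use that $J_\fq$ is maximal Cohen-Macaulay and that $R$ is a generalized $f$-ring at $\fq$ to get $\gd(\fp,J)=\depth J_\fq=\dim R_\fq\ge\gd(\fq,R)=\dim R-\dim R/\fq\ge\dim R-\dim R/\fp\ge\gd(\fp,J)$, which closes the chain. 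Without this global-to-local minimum formula, your proposed reduction of the generalized condition at $\fp$ to ``honest Cohen-Macaulayness of $J_\fp$'' does not go through.
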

\begin{proof}
	We first assume that $\dim J>1$. The 
	process of proof shows that the opposite
	assumption,  $\dim J\leq 1$, leads to  
	trivial cases.\\
	$(1) \Rightarrow (2)$
	Assume that $\rr$ is a generalized $f$-ring and 
	pick $\fp \in \Spec (R)$ satisfying
	$\dim R/\fp >1$.  
	By \cite[Lemma 3.6]{DFF2},
	$\iota _R: R\to R\bowtie^f J$ is 
	an integral extension.
	Hence, by lying over property, 	$\fp= \fp^{ec}$, 
	hence that 
		$\dim \rr/\fp^e = \dim R/\fp>1$.
	Now, by Proposition \ref{nhm1} and Lemma \ref{gdlem}, we have:
	\begin{align*}
	\dim R - \dim R/\fp&= \dim \rr - \dim \rr/\fp^e\\
	&=\gd (\fp^e,\rr)\\
	&\leq \gd (\fp,R)\\
	&\leq \dim R - \dim R/\fp.
	\end{align*}	
	Again we use Proposition \ref{nhm1} to see that	
	$R$ is a generalized $f$-ring, and a similar argument will show that $J$ is a maximal generalized $f$-module.
	
	$(2) \Rightarrow (1)$ Suppose that
	$R$ is a generalized $f$-ring and  $J$ is a maximal generalized $f$-module.
    Then, from Lemma \ref{gdlem} and Proposition \ref{nhm1},   we deduce that 
	$\gd (\fp^e,\rr)=\gd (\fp,R)$, for any $\fp\in\Spec (R)$.	
	Now, let $\mathcal{P}\in \Spec (\rr)$ and 
	$\dim \rr/\mathcal{P}>1$.
	Then $\dim R/\mathcal{P}^c>1$
	and, by Lemma \ref{gdlem} and Proposition \ref{nhm1}, we have:
	\begin{align*}
	\dim \rr - \dim \rr/\mathcal{P}&=
	\dim R - \dim R/\mathcal{P}^c\\
	&=\gd (\mathcal{P}^c,R)\\
	&= \gd (\mathcal{P}^{ce},\rr)\\
	&\le \gd (\mathcal{P},\rr)\\
	&\le \dim \rr - \dim \rr/\mathcal{P}.
	\end{align*}
	Thus  inequalities are equality, 
	and another appeal to Proposition  \ref{nhm1} gives the desired conclusion.
	
	$(2) \Rightarrow (3)$ Let $\fp \in \Supp (J)$ with the property $\dim R/\fp >1$.
	 In order to show that
	$J_\fp$ is maximal Cohen-Macaulay, observe that
	 \cite[Proposition 4.4]{nh} together
	 with our assumptions yields the following
	 inequalities:
	 $$\depth J_\fp \geq \gd (\fp,J)
	 =\dim R - \dim R/\fp \geq \dim R_\fp \geq \depth J_\fp.$$
	 $(3) \Rightarrow (2)$ Let $\fp \in \Supp (J)$
	  satisfying $\dim R/\fp >1$.
	  Then, using \cite[Proposition 4.4]{nh} and
	  \cite[Proposition 1.2.10(a)]{BH98}, we get a prime ideal $\fq$ containing $\fp$ 
	  such that $\fq \in \Supp (J)$,  $\dim R/\fq >1$, and $\gd (\fp,J)= \depth J_\fq$.
	  The following
	  inequalities complete the proof:
\begin{center}
	  $\gd (p,J)= \depth J_\fq = \dim R_\fq \geq
	  \gd (\fq, R)=$ $\dim R - \dim R/\fq \geq
	  \dim R - \dim R/\fp \geq \gd (p,J).$
\end{center}
\end{proof}

Recall that if $f:=id_R$ is the identity homomorphism
on $R$, and $I$ is an ideal of $R$, then $R\bowtie
I:=R\bowtie^{id_R} I$ is called the
amalgamated duplication of $R$ along $I$.
The next corollary deals with this case.
\begin{cor}
	$R\bowtie I$ is a generalized $f$-ring if and only if $R$ is a generalized $f$-ring and  $I$ is maximal generalized $f$-module if and only if $R$ is a generalized $f$-ring and
	$I_\fp$ is maximal Cohen-Macaulay for
	any $\fp \in \Supp (I)$ satisfying
	$\dim R/\fp >1$.
\end{cor}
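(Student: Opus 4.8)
The plan is to obtain this corollary as the direct specialization of Theorem \ref{gd} to the situation $S=R$, $f=id_R$, and $J=I$. By the definition recalled just above the statement, $R\bowtie I=R\bowtie^{id_R}I$ is literally an instance of the amalgamated algebra construction, so in principle nothing new has to be proved; the work consists only in checking that the hypotheses transfer and that the specialized conditions read as claimed.

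First I would verify the standing assumptions of the paper in this setting. Since $(R,\fm)$ is Noetherian local and $I$ is an ideal of $R$, it is automatically finitely generated as an $R$-module, so the requirement that $J$ be finitely generated is met. The hypothesis $J\subseteq \Jac(S)$ becomes $I\subseteq \Jac(R)=\fm$, which is exactly the condition guaranteeing that $R\bowtie I$ is again Noetherian local. Hence Theorem \ref{gd} applies without modification.

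Next I would note that, because $f=id_R$, the $R$-module structure placed on $J$ via $f$ in Theorem \ref{gd} coincides with the ordinary $R$-module structure on the ideal $I$. Therefore the statements ``$J$ is a maximal generalized $f$-module'' and ``$J_\fp$ is maximal Cohen-Macaulay for any $\fp\in\Supp(J)$ with $\dim R/\fp>1$'' specialize verbatim to the corresponding statements for $I$, with $\Supp(J)=\Supp_R J$ becoming $\Supp(I)$. Substituting these identifications into the three equivalent conditions of Theorem \ref{gd} yields precisely the three-fold equivalence asserted in the corollary, and the proof is complete. I expect no real obstacle: the only point demanding attention is confirming the local and finiteness hypotheses, and both are immediate once $f=id_R$ and $I\subseteq\fm$ are observed.
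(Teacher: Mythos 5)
Your proposal is correct and matches the paper's (implicit) argument: the corollary is stated without proof precisely because it is the verbatim specialization of Theorem \ref{gd} to $S=R$, $f=id_R$, $J=I$, and the only content is the routine verification of the standing hypotheses that you carry out. Nothing further is needed.
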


Let $M$ be an $R$-module. Nagata (1955) considered a ring
extension of $R$ called the the \emph{idealization} of $M$ in $R$
, denoted here by
$R\ltimes M$ \cite[page 2]{Na}. As in
\cite[Remark 2.8]{DFF}, if $S:=R\ltimes M$, $J:=0\ltimes M$, and
$\iota:R\to S$ be the natural embedding, then $R\bowtie^\iota J\cong
R\ltimes M$.
It is easy to check that, as $R$-modules,
$0\ltimes M \cong M$.
The following corollary shows
when the idealization is generalized $f$-ring.

\begin{cor}
	If $M$ is a finitely generated $R$-module, then
$R\ltimes M$ is a generalized $f$-ring  if and only if $R$ is a generalized $f$-ring and  $M$ is a maximal generalized $f$-module	if and only if
$R$ is a generalized $f$-ring and
$M_\fp$ is maximal Cohen-Macaulay for
any $\fp \in \Supp M$ satisfying
$\dim R/\fp >1$.
\end{cor}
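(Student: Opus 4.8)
The plan is to deduce this corollary directly from Theorem \ref{gd} by specializing the amalgamation data to the idealization. Concretely, I would set $S:=R\ltimes M$, take $f:=\iota$ to be the natural embedding $R\to R\ltimes M$, and put $J:=0\ltimes M$, so that $R\bowtie^\iota J\cong R\ltimes M$ as recalled before the statement. We may assume $M\neq 0$, since otherwise $R\ltimes M=R$ and the assertion is vacuous; with $M\neq 0$ the ideal $J=0\ltimes M$ is automatically nonzero and proper, as required by our standing conventions. What remains is to confirm that this data meets the hypotheses under which Theorem \ref{gd} was established, and then to rewrite the three resulting conditions in terms of $M$.

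The substantive step is the verification of the hypotheses on $J$. Since $0\ltimes M\cong M$ as $R$-modules and $M$ is finitely generated, $J$ is finitely generated as an $R$-module. Next I would check the Jacobson-radical condition: multiplication in $R\ltimes M$ satisfies $(0,m)(0,m')=(0,0)$ for all $m,m'\in M$, so $0\ltimes M$ is a square-zero, hence nilpotent, ideal of $R\ltimes M$; therefore it is contained in every prime ideal and, a fortiori, $J=0\ltimes M\subseteq\Jac(R\ltimes M)$. Finally, since $(R,\fm)$ is Noetherian local, so is $(R\ltimes M,\fm\ltimes M)$. Thus every hypothesis of Theorem \ref{gd} is in force.

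Granting this, Theorem \ref{gd} immediately gives the equivalence of the statements that $R\bowtie^\iota J$ is a generalized $f$-ring; that $R$ is a generalized $f$-ring and $J$ is a maximal generalized $f$-module; and that $R$ is a generalized $f$-ring and $J_\fp$ is maximal Cohen-Macaulay for every $\fp\in\Supp J$ with $\dim R/\fp>1$. To conclude, I would transport each condition across the ring isomorphism $R\bowtie^\iota J\cong R\ltimes M$ and the $R$-module isomorphism $J\cong M$. The one point meriting a remark, and the closest thing to an obstacle here, is that the invariants on the right-hand side are properties of the $R$-module structure alone: $\Supp$, the generalized depth $\gd(\fp,-)$, and $\depth$ of a localization are all preserved under $R$-module isomorphism. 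Consequently \emph{maximal generalized $f$-module} and \emph{$J_\fp$ maximal Cohen-Macaulay} may be read verbatim with $M$ in place of $J$, yielding the stated chain of equivalences. Beyond this bookkeeping the corollary carries no independent difficulty, all the content residing in Theorem \ref{gd}.
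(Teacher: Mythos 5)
Your proposal is correct and matches the paper's (implicit) argument exactly: the corollary is obtained by specializing Theorem \ref{gd} to $S=R\ltimes M$, $J=0\ltimes M$, $f=\iota$, using $R\bowtie^\iota J\cong R\ltimes M$ and $0\ltimes M\cong M$ as $R$-modules. Your extra care in checking the standing hypotheses ($J$ finitely generated over $R$, $J\subseteq\Jac(S)$ via nilpotency, the $M=0$ degenerate case) is sound and only makes explicit what the paper leaves to the reader.
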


In the remaining part of the paper we investigate
when $\rr$ is an $f$-ring. The arguments are
the same as the ones in the case of generalized $f$-ring.
But, for the reader’s convenience, we give brief proofs and refer the reader
to previous arguments.

The notion of $M$-\emph{filter regular sequence} is defined as a sequence  $x_1,\ldots, x_n$ of elements in $\fm$ such that $x_i \notin \fp$ for all $\fp \in \Ass (M/(x_1,\ldots, x_{i-1})M)\setminus \{\fm\}$ and for all $i = 1,\ldots, n$.
The \emph{filter depth}, $\fd (I,M)$, of $I$ on $M$ is defined as the length of any maximal $M$-filter regular sequence in $I$. Here, we use the following characterization for $\fd (I,M)$ (see \cite[Theorem 3.1]{M} and \cite[Theorem 3.10]{lt}):
$$\fd(I, M )=\inf\{r\mid H^r_{I}(M) \text{ is not an Artinian} \ R\text{-module}\}.$$

The following lemma expresses $\fd(\fp^e,\rr)$, the $\fd$ of extension of a prime ideal $\fp$ of $R$ in $\rr$. For the proof, we use the elementary fact that being Artinian as an $\rr$-module
is the same as being Artinian as an $R$-module. 
\begin{lem}\label{fdlem}
Let $\fp\in \Spec (R)$. Then
the following holds:
  $$\fd(\fp^e,\rr)=\min \{\fd (\fp,R),\fd (\fp,J)\}.$$	
\end{lem}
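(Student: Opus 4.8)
The plan is to mirror the proof of Lemma~\ref{gdlem} almost verbatim, replacing the $\gd$-characterization from Lemma~\ref{0} by the filter-depth characterization $\fd(I,M)=\inf\{r\mid H^r_{I}(M)\text{ is not Artinian}\}$ recorded just above. In the $\gd$ argument the decisive local condition was ``there is a prime in the support with $\dim R/\fp>1$'', and the passage between the $\rr$-module picture and the $R$-module picture rested on the integrality of $\iota_R$. Here the decisive condition is simply ``not Artinian'', and the corresponding passage rests on the elementary fact, already stated, that a module is Artinian over $\rr$ if and only if it is Artinian over $R$. This actually makes the filter version slightly cleaner, since Artinian-ness behaves transparently under restriction of scalars and under finite direct sums, whereas the dimension condition in the $\gd$ case needed the lying-over bookkeeping.

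Concretely, I would run the following chain of equalities:
\begin{align*}
\fd(\fp^e,\rr) &= \inf\{r\mid H^r_{\fp^e}(\rr)\text{ is not Artinian as an }\rr\text{-module}\}\\
&= \inf\{r\mid H^r_{\fp^e}(\rr)\text{ is not Artinian as an }R\text{-module}\}\\
&= \inf\{r\mid H^r_{\fp}(\rr)\text{ is not Artinian as an }R\text{-module}\}\\
&= \inf\{r\mid H^r_{\fp}(R)\oplus H^r_{\fp}(J)\text{ is not Artinian}\}\\
&= \min\{\fd(\fp,R),\fd(\fp,J)\}.
\end{align*}
The first equality is the definition. The second is exactly the stated equivalence of Artinian-ness over $\rr$ and over $R$. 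The third is the Independence Theorem \cite[Theorem 4.2.1]{BS} applied to $\iota_R\colon R\to\rr$ and the ideal $\fp$, giving an $R$-module isomorphism $H^r_{\fp^e}(\rr)\cong H^r_{\fp}(\rr)$. The fourth uses the $R$-module isomorphism $\rr\cong R\oplus J$ of \cite[Lemma 2.3]{DFF} together with the additivity of local cohomology over direct sums. The last equality is where the splitting is exploited: a finite direct sum of $R$-modules is Artinian if and only if each summand is Artinian, so the least $r$ at which $H^r_{\fp}(R)\oplus H^r_{\fp}(J)$ fails to be Artinian is the smaller of the least such $r$ for $H^r_\fp(R)$ and for $H^r_\fp(J)$, and these two infima are $\fd(\fp,R)$ and $\fd(\fp,J)$ by the characterization.

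I do not expect a genuine obstacle here; the content lives entirely in the three imported ingredients (the Artinian equivalence over the two rings, the Independence Theorem, and the $R$-splitting $\rr\cong R\oplus J$), each of which is available. The only point deserving a moment of care is the direct-sum step: one must confirm that ``not Artinian'' is compatible with the finite direct sum, i.e.\ that $A\oplus B$ is Artinian precisely when both $A$ and $B$ are, which is standard. Hence, as the text promises, the proof is the same as that of Lemma~\ref{gdlem} with only these cosmetic substitutions, and I would present it briefly in that spirit.
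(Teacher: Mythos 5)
Your chain of equalities is exactly the one in the paper's proof, justified by the same three ingredients (Artinian-ness being insensitive to whether one works over $\rr$ or $R$, the Independence Theorem, and the $R$-module splitting $\rr\cong R\oplus J$). The proposal is correct and takes essentially the same approach as the paper.
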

\begin{proof}
By
 \cite[Theorem 3.10]{lt} (and arguments similar to Lemma \ref{gdlem}), we have:
\begin{align*}
\fd(\fp^e, \rr )
&=\inf\{r|H^r_{\fp^e}(\rr) \text{ is not Artinian} \ \rr\text{-module}\}\\
&=\inf\{r|H^r_{\fp^e}(\rr) \text{ is not Artinian} \ R\text{-module}\}\\
&=\inf\{r|H^r_\fp(\rr) \text{ is not Artinian} \ R\text{-module}\}\\
&=\inf\{r|H^r_\fp(R)\oplus H^r_\fp(J)  \text{ is not Artinian}\ R\text{-module}\}\\
&= \min \{\fd (\fp,R),\fd (\fp,J)\}.
\end{align*}
\end{proof}
In \cite{cst}, the authors introduced
\emph{f-modules} as modules for which every system of parameters is a filter regular sequence. The ring $R$ is called an \emph{$f$-ring} if it is an $f$-module over itself.
This structure is a well-known structure in commutative algebra and have applications in algebraic geometry. For more details we refer the reader to \cite{cst}, \cite{SV}, and \cite{lt}.
We define an $R$-module $M$ to be 
\emph{maximal $f$-module} if
 $\fd (\fp,M)=\dim (R) - \dim (R/\fp)$, 
 for any $\fp \in \Supp M\setminus \{\fm\}$.
This definition has stem in the following 
proposition \cite[Theorem 4.1 and Remark 4.2]{lt}:
\begin{prop}\label{lutang}
	For a finitely generated $R$-module $M$,  the following statements are equivalent:
	\begin{itemize}
		\item [(1)]
		$M$ is an $f$-module
		\item [(2)]
		for any $\fp \in \Supp M\setminus \{\fm\}$,
		$\fd (\fp,M)=\dim (M) - \dim (R/\fp)$
		\item [(3)]
		for any proper ideal $I$ of $R$
		with the property $I\supseteq \Ann (M)$ and $\sqrt{I}\neq \fm$,
		$\fd (I,M)=\dim (M) - \dim (R/I)$
	\end{itemize}
\end{prop}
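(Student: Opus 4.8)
The statement is the filter-depth counterpart of Proposition \ref{nhm1}, and it is in fact \cite[Theorem 4.1 and Remark 4.2]{lt}; accordingly the plan is to reproduce the proof of Proposition \ref{nhm1} with $\fd$ in place of $\gd$, filter regular sequences in place of generalized regular sequences, and the condition $\fp\neq\fm$ (equivalently $\dim R/\fp\geq 1$) in place of $\dim R/\fp>1$. Everything rests on the cohomological characterization $\fd(I,M)=\inf\{r\mid H^r_{I}(M)\text{ is not Artinian}\}$ together with the two general bounds $\depth(I,M)\leq \fd(I,M)\leq \dim M-\dim R/I$, valid for every proper ideal $I\supseteq \Ann(M)$ with $\sqrt{I}\neq \fm$. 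The left inequality is immediate, since $H^r_{I}(M)=0$ (hence Artinian) for $r<\depth(I,M)$. For the right inequality I would pick a minimal prime $\fp$ of $I$ with $\dim R/\fp=\dim R/I$; then $\fp\neq\fm$, and passing to $R_\fp$ gives $\fd(I,M)\leq \depth(I R_\fp,M_\fp)\leq \dim M_\fp\leq \dim M-\dim R/\fp=\dim M-\dim R/I$.

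For $(1)\Rightarrow(2)$, I would fix $\fp\in\Supp M\setminus\{\fm\}$ and choose elements of $\fp$ forming part of a system of parameters of $M$, of length $\dim M-\dim R/\fp$. The $f$-module hypothesis forces every system of parameters to be filter regular, so this initial segment is a filter regular sequence in $\fp$; it therefore witnesses $\fd(\fp,M)\geq \dim M-\dim R/\fp$, and the general upper bound from the first paragraph gives the reverse inequality, hence equality.

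For $(2)\Rightarrow(3)$, which is the step to be carried out by analogy with \cite[Remark 4.2]{lt}, I would detect Artinianness of $H^r_{I}(M)$ on the punctured spectrum: since membership $\fp\in\Supp H^r_{I}(M)$ with $\fp\neq\fm$ is equivalent to $H^r_{I R_\fp}(M_\fp)\neq 0$, one reduces $\fd(I,M)$ to the minimum of $\fd(\fp,M)$ as $\fp$ runs over the minimal primes of $I$ lying in $\Supp M$, and then applies $(2)$ to each such $\fp$, using $\dim R/I=\max_\fp \dim R/\fp$, to obtain $\fd(I,M)=\dim M-\dim R/I$. Finally $(3)\Rightarrow(1)$ is a direct induction: for an arbitrary system of parameters $x_1,\ldots,x_d$, condition $(3)$ applied to the ideals $(x_1,\ldots,x_i)$ (whose radical is not $\fm$ for $i<d$) pins down $\fd$ and forces $x_{i+1}\notin\fp$ for every $\fp\in\Ass(M/(x_1,\ldots,x_i)M)\setminus\{\fm\}$, which is exactly filter regularity. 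The main obstacle I expect is the sharpness in $(2)\Rightarrow(3)$, namely verifying that some $H^r_{I}(M)$ with $r=\dim M-\dim R/I$ is genuinely non-Artinian; this requires controlling the behaviour of $\fd$ under the passage from $I$ to its minimal primes, and it is precisely where the hypotheses $I\supseteq \Ann(M)$ and $\sqrt{I}\neq\fm$ enter. The remaining implications are bookkeeping once the two bounds and the cohomological characterization of $\fd$ are in hand.
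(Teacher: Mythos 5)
The paper does not actually prove this proposition: it is quoted verbatim from the literature, with the citation \cite[Theorem 4.1 and Remark 4.2]{lt} attached to the sentence introducing it, and no proof environment follows. So your sketch supplies an argument where the paper supplies none; what you have written is essentially a reconstruction of the L\"u--Tang proof that the citation points to, built from exactly the ingredients the paper itself uses elsewhere (the Artinian-cohomology characterization of $\fd$, and the localization formula $\fd(I,M)=\min\{\depth(IR_\fq,M_\fq)\mid \fq\in\Supp(M/IM)\setminus\{\fm\}\}$ that appears in the proof of Theorem \ref{fd}). Your outline is sound, and the step you flag as the main obstacle in $(2)\Rightarrow(3)$ does go through: combining the localization formula with $\gr(\fa,N)=\min\{\depth N_{\fp}\mid \fp\in \V(\fa)\cap\Supp N\}$ gives $\fd(I,M)=\min\{\depth M_{\fp}\mid \fp\in \V(I)\cap\Supp M\setminus\{\fm\}\}$, which immediately yields the reduction to the minimal primes of $I$ (all of which are automatically distinct from $\fm$ when $\sqrt{I}\neq\fm$). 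The one place where your wording glosses over a real point is $(3)\Rightarrow(1)$: condition $(3)$ only pins down the numerical value $\fd((x_1,\ldots,x_{i+1})+\Ann(M),M)=i+1$, and to conclude that the \emph{specific} element $x_{i+1}$ avoids every $\fp\in\Ass(M/(x_1,\ldots,x_i)M)\setminus\{\fm\}$ you need the extra observation that such a $\fp$ containing $x_{i+1}$ would satisfy $\depth M_{\fp}\leq i$ (because $\fp R_\fp\in\Ass(M_\fp/(x_1,\ldots,x_i)M_\fp)$), forcing $\fd\leq i$ and a contradiction. With that inserted, your proof is complete; it is the standard argument, just carried out rather than cited.
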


We use the above proposition to  investigate
when $\rr$ is $f$-ring, which is our final result.
 
\begin{thm}\label{fd}
The following statements are equivalent:
	 \begin{itemize}
	 	\item [(1)]
	 	$\rr$ is an $f$-ring.
	 	\item [(2)]
	 	$R$ is an $f$-ring and  $J$ is a maximal  $f$-module.
	 	\item [(3)]
	 	$R$ is an $f$-ring and
	 	$J_\fp$ is  maximal Cohen-Macaulay for any $\fp \in \Supp (J)\setminus \{\fm\}$.
	 \end{itemize}
\end{thm}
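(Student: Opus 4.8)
The plan is to transcribe the proof of Theorem~\ref{gd} almost verbatim, replacing the generalized depth $\gd$ throughout by the filter depth $\fd$, Proposition~\ref{nhm1} by Proposition~\ref{lutang}, Lemma~\ref{gdlem} by Lemma~\ref{fdlem}, and the dimension restriction ``$\dim R/\fp>1$'' by the weaker restriction ``$\fp\neq\fm$''. As before I would begin by recording, via \cite[Lemma~3.6]{DFF2}, that $\iota_R\colon R\to\rr$ is an integral extension of Noetherian local rings; consequently contraction/extension is a bijection on the respective maximal ideals, so for $\mathcal P\in\Spec(\rr)$ one has $\mathcal P=\fm^{\prime_f}$ if and only if $\mathcal P^c=\fm$, and $\fp=\fp^{ec}$ with $\dim\rr/\fp^e=\dim R/\fp$ for every $\fp\in\Spec(R)$. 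This is exactly what makes the excluded primes on the two sides correspond.

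For $(1)\Rightarrow(2)$ I would fix $\fp\in\Spec(R)$ with $\fp\neq\fm$, note $\fp^e\neq\fm^{\prime_f}$, and combine Proposition~\ref{lutang} (applied to the $f$-ring $\rr$) with Lemma~\ref{fdlem} to obtain
\[
\dim R-\dim R/\fp=\dim\rr-\dim\rr/\fp^e=\fd(\fp^e,\rr)\leq\fd(\fp,R)\leq\dim R-\dim R/\fp,
\]
so all terms coincide and $R$ is an $f$-ring by Proposition~\ref{lutang}; bounding instead by $\fd(\fp,J)$ and restricting to $\fp\in\Supp(J)\setminus\{\fm\}$ shows $J$ is a maximal $f$-module. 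For $(2)\Rightarrow(1)$ I would first extract $\fd(\fp^e,\rr)=\fd(\fp,R)$ for all $\fp\in\Spec(R)$ (from Lemma~\ref{fdlem} and Proposition~\ref{lutang}, exactly as in Theorem~\ref{gd}), and then for $\mathcal P\in\Spec(\rr)$ with $\mathcal P\neq\fm^{\prime_f}$ (hence $\mathcal P^c\neq\fm$) run
\[
\dim\rr-\dim\rr/\mathcal P=\dim R-\dim R/\mathcal P^c=\fd(\mathcal P^c,R)=\fd(\mathcal P^{ce},\rr)\leq\fd(\mathcal P,\rr)\leq\dim\rr-\dim\rr/\mathcal P,
\]
forcing equality and, through Proposition~\ref{lutang}, the conclusion that $\rr$ is an $f$-ring.

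The equivalence $(2)\Leftrightarrow(3)$ compares $\fd(\fp,J)$, computed over $(R,\fm)$, with $\depth J_\fp$, computed over $R_\fp$. The one genuinely new ingredient I must isolate is the filter analogue of \cite[Proposition~4.4]{nh}: for $\fp\neq\fm$ one has $\depth J_\fp\geq\fd(\fp,J)$. This holds because whenever $r<\fd(\fp,J)$ the module $H^r_\fp(J)$ is Artinian, hence supported only at $\fm$, so its localization $H^r_{\fp R_\fp}(J_\fp)\cong(H^r_\fp(J))_\fp$ vanishes for $\fp\neq\fm$. Granting this, $(2)\Rightarrow(3)$ follows from the chain
\[
\depth J_\fp\geq\fd(\fp,J)=\dim R-\dim R/\fp\geq\dim R_\fp\geq\depth J_\fp
\]
(the middle equality because $J$ is a maximal $f$-module, the third inequality since $\dim R_\fp+\dim R/\fp\leq\dim R$), forcing $\depth J_\fp=\dim R_\fp$; and $(3)\Rightarrow(2)$ follows after using \cite[Proposition~1.2.10(a)]{BH98} to produce $\fq\supseteq\fp$ in $\Supp(J)\setminus\{\fm\}$ with $\fd(\fp,J)=\depth J_\fq=\dim R_\fq$ and then running the analogous chain of inequalities.

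I expect the work to be bookkeeping rather than conceptual, since it is a faithful copy of the generalized $f$-ring argument. The two points needing genuine care are checking that the substitution of ``$\fp\neq\fm$'' for ``$\dim R/\fp>1$'' is precisely what the filter-regular theory (and Proposition~\ref{lutang}) demands at each step, and pinning down the localization inequality $\depth J_\fp\geq\fd(\fp,J)$ via the Artinian-vanishing argument above; everything else transcribes directly.
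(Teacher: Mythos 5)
Your overall strategy coincides with the paper's: the equivalence $(1)\Leftrightarrow(2)$ is handled exactly as you describe, by transcribing the proof of Theorem \ref{gd} with Lemma \ref{fdlem} and Proposition \ref{lutang} in place of Lemma \ref{gdlem} and Proposition \ref{nhm1}, and your chains of (in)equalities are the ones the paper intends. Your direction $(2)\Rightarrow(3)$ is also correct, and your argument for $\depth J_\fp\geq\fd(\fp,J)$ (for $r<\fd(\fp,J)$ the module $H^r_\fp(J)$ is Artinian, hence $\fm$-torsion, hence vanishes after localizing at $\fp\neq\fm$, and flat base change identifies that localization with $H^r_{\fp R_\fp}(J_\fp)$) is a clean, self-contained justification of an inequality the paper instead extracts from a quoted formula.

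The gap is in $(3)\Rightarrow(2)$. What you call ``the filter analogue of \cite[Proposition 4.4]{nh}'' is only the easy half of it. The full analogue, which the paper states explicitly and credits to the proof of \cite[Theorem 3.10]{lt}, is the equality
$$\fd(\fp,J)=\min\{\depth(\fp R_\fq,J_\fq)\mid \fq\in\Supp(J/\fp J)\setminus\{\fm\}\},$$
and it is the inequality ``$\geq$'' in this formula that $(3)\Rightarrow(2)$ genuinely needs: combined with \cite[Proposition 1.2.10(a)]{BH98} it is what produces a prime $\fq\supseteq\fp$ in $\Supp(J)\setminus\{\fm\}$ with $\fd(\fp,J)=\depth J_\fq$, so that the chain $\fd(\fp,J)=\depth J_\fq=\dim R_\fq\geq\fd(\fq,R)=\dim R-\dim R/\fq\geq\dim R-\dim R/\fp\geq\fd(\fp,J)$ closes up. Your localization argument only yields the ``$\leq$'' direction (Artinianness of $H^r_\fp(J)$ forces the localized cohomology to vanish), and \cite[Proposition 1.2.10(a)]{BH98} is a statement about ordinary grade, so by itself it cannot convert a lower bound on the depths $\depth J_\fq$ into a lower bound on $\fd(\fp,J)$: that step amounts to showing that if $H^r_{\fp R_\fq}(J_\fq)=0$ for all $\fq\neq\fm$ then $H^r_\fp(J)$ is actually Artinian, not merely supported at $\{\fm\}$, which is Melkersson's Artinianness criterion \cite{M} and is precisely the nontrivial content of \cite[Theorem 3.10]{lt}. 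You should prove or cite that equality; once it is in hand, the rest of your argument goes through as written.
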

\begin{proof}
	$(1) \Rightarrow (2)$
	Assume that $\rr$ is an $f$-ring and 
pick $\fp \in \Spec (R) \setminus \{\fm\}$.  	
As before, the extension
$\iota _R: R\to R\bowtie^f J$ is 
 integral, and so 
$\fp= \fp^{ec}$. 
Thus  $\sqrt{\fp^e}\neq \fm^{\prime_f}$
and $\dim \rr/\fp^e = \dim R/\fp$.
Then Proposition \ref{lutang} gives the desired conclusion, just as in the proof of Theorem \ref{gd}.

$(2) \Rightarrow (1)$ Suppose	that
$R$ is an $f$-ring and  $J$ is a maximal $f$-module, and
let $\mathcal{P}\in \Spec (\rr)\setminus \{\fm^{\prime_f}\}$.
Then $\mathcal{P}^c\in \Spec (R) \setminus \{\fm\}$
and  Proposition \ref{lutang} gives the desired
conclusion, as in the case of Theorem \ref{gd}.

$(2)\Leftrightarrow (3)$ The proof of this part
is the same as the proof in Theorem \ref{gd}, 
using the following equality instead of
\cite[Proposition 4.4]{nh}:
$$\fd (\fp , J)= \min \{\depth (\fp R_\fq ,J_\fq) \mid \fq \in \Supp (J/\fp J)\setminus \{ \fm\} \}.$$
For the proof the 
equality, see the proof of \cite[Theorem 3.10]{lt}.
\end{proof}

\begin{cor}
	(cf. \cite[Theorem 3.5]{SSh}.)
	$R\bowtie I$ is an  $f$-ring  if and only if $R$ is an  $f$-ring and  $I$ is maximal  $f$-module  if and only if $R$ is an $f$-ring and
	$I_\fp$ is maximal Cohen-Macaulay for any $\fp \in \Supp (I)\setminus \{\fm\}$.
\end{cor}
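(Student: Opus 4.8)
The plan is to obtain this corollary as the special case of Theorem \ref{fd} in which $S=R$, $f=\mathrm{id}_R$, and $J=I$, so that $\rr=R\bowtie^{\mathrm{id}_R}I=R\bowtie I$. Before invoking the theorem, I would confirm that the standing hypotheses of the paper survive this specialization. Since $S=R$, the ring $S$ is Noetherian local with $\Jac(S)=\Jac(R)=\fm$. As $I$ is an ideal of the Noetherian ring $R$, it is automatically finitely generated as an $R$-module, and being a proper ideal of the local ring $R$ it satisfies $I\subseteq\fm=\Jac(S)$. Thus the requirements that $J$ be a (non-zero, proper) finitely generated $R$-module contained in $\Jac(S)$ all hold without extra assumptions.

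Next I would check that the module-theoretic data match. When $J=I$ is regarded as an $R$-module through $f=\mathrm{id}_R$, its module structure is precisely that of the ideal $I$ itself, so $\Supp J=\Supp_R I$ is the ordinary support and no reinterpretation is required. Consequently the phrase ``$J$ is a maximal $f$-module'' reads as ``$I$ is a maximal $f$-module,'' and the condition ``$J_\fp$ is maximal Cohen-Macaulay for every $\fp\in\Supp(J)\setminus\{\fm\}$'' becomes the corresponding statement for $I$. With these identifications, the three equivalent conditions of Theorem \ref{fd} translate verbatim into the three conditions of the corollary, and a direct appeal to the theorem yields the claimed chain of equivalences.

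I expect no genuine obstacle here: the argument is a clean specialization, and the only point warranting care is the bookkeeping that ensures the ideal $I$ and the $R$-module $I$ obtained via $f=\mathrm{id}_R$ coincide, so that both the support condition and the maximal Cohen-Macaulay condition transfer unchanged. The reference to \cite[Theorem 3.5]{SSh} signals that the resulting statement recovers the previously known characterization for the amalgamated duplication, now as an immediate instance of the more general amalgamation result.
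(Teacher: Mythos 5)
Your proposal is correct and matches the paper's intent exactly: the corollary is stated without proof precisely because it is the specialization of Theorem \ref{fd} to $S=R$, $f=\mathrm{id}_R$, $J=I$, and your verification that the standing hypotheses (finite generation of $I$ over $R$, $I\subseteq\fm=\Jac(S)$, the local Noetherian setting) survive this specialization is the only content needed. No gaps.
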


\begin{cor}
	If $M$ is a finitely generated $R$-module, then
	$R\ltimes M$ is an $f$-ring  if and only if $R$ is an $f$-ring and  $M$ is a maximal  $f$-module	if and only if
	$R$ is an $f$-ring and
	$M_\fp$ is maximal Cohen-Macaulay for
	any $\fp \in \Supp (M)\setminus \{\fm\}$.
\end{cor}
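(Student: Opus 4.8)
The plan is to obtain this corollary as a direct specialization of Theorem \ref{fd}, exactly mirroring the way the idealization corollary for generalized $f$-rings was derived from Theorem \ref{gd}. Following the discussion preceding that corollary, I would set $S:=R\ltimes M$, $J:=0\ltimes M$, and take $f:=\iota\colon R\to S$ to be the natural embedding. Then $R\bowtie^\iota J\cong R\ltimes M$ by \cite[Remark 2.8]{DFF}, and, as already observed, $0\ltimes M\cong M$ as $R$-modules. Consequently $\Supp_R J=\Supp_R M$ and the two $R$-module structures agree, so that ``$J$ is a maximal $f$-module'' is literally the assertion ``$M$ is a maximal $f$-module,'' and likewise $J_\fp$ is maximal Cohen-Macaulay precisely when $M_\fp$ is. Granting the standing hypotheses of the section, Theorem \ref{fd} then yields the three stated equivalences with $J$ replaced throughout by $M$.

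The one step requiring genuine verification is that the assumptions under which Theorem \ref{fd} was proved do hold in this instance. First, $J=0\ltimes M$ is finitely generated as an $R$-module because $M$ is, so the finiteness requirement on $J$ is met. Second, one must check $J\subseteq\Jac(S)$: this is immediate since $(0\ltimes M)^2=0$ in $R\ltimes M$, so $0\ltimes M$ lies in the nilradical and hence in $\Jac(S)$. Finally, with $(R,\fm)$ Noetherian local and $M$ finitely generated, $S=R\ltimes M$ is again Noetherian local with maximal ideal $\fm\ltimes M$, so the framework ``$(R,\fm)$ Noetherian local, $J$ a finitely generated $R$-module contained in $\Jac(S)$'' is in force and $(\rr,\fm^{\prime_f})$ is Noetherian local as required.

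I expect the only mild obstacle to be this bookkeeping of hypotheses; once it is in place there is no further content. Every occurrence of $J$ in Theorem \ref{fd} may be replaced by $M$ using the isomorphism $0\ltimes M\cong M$ and the resulting equality of supports, and in particular the condition ``$\fp\in\Supp(J)\setminus\{\fm\}$'' becomes ``$\fp\in\Supp(M)\setminus\{\fm\}$'' unchanged. Thus the translation is purely notational, and the corollary follows at once.
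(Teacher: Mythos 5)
Your proposal is correct and follows exactly the route the paper intends: the corollary is a direct specialization of Theorem \ref{fd} via $S:=R\ltimes M$, $J:=0\ltimes M$, and the isomorphisms $R\bowtie^\iota J\cong R\ltimes M$ and $0\ltimes M\cong M$, and your verification of the standing hypotheses (finite generation of $J$, $J\subseteq\Jac(S)$ via $(0\ltimes M)^2=0$, and the local Noetherian setup) is the right bookkeeping, which the paper leaves implicit.
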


\end{document}